\newtheorem{lemma}{Lemma}
\newtheorem{proposition}{Proposition}
\newtheorem{corollary}{Corollary}
\newtheorem*{problem}{Problem}
\newtheorem*{conjecture}{Conjecture}
\newcommand{\M}{\mathcal{M}}
\newcommand{\ld}{\leftthreetimes}
\newcommand{\llb} {\llbracket}
\newcommand{\rrb} {\rrbracket}
\renewcommand{\r}{\rho}
\title{On the commutative center of Moufang loops}
\author{Alexander N. Grishkov}
\address{Instituto de Matem\'atica e Estat\'\i stica, S\~ao Paulo, Brazil}
\email{shuragri@gmail.com}
\author{Andrei V. Zavarnitsine}
\address{Sobolev Institute of Mathematics, Novosibirsk, Russia}
\email{zav@math.nsc.ru}
\thanks{Supported by FAPESP (proc. 2017/14489-2)}
\date{}
\begin{document}
\begin{abstract} We construct two infinite series of Moufang loops of exponent $3$ whose commutative center (i.\,e. the set of elements
that commute with all elements of the loop) is not a normal subloop. In particular, we obtain examples of such loops of
orders $3^8$ and $3^{11}$ one of which can be defined as the Moufang triplication of the free Burnside group $\operatorname{B}(3,3)$.

{\sc Keywords:} Moufang loop, commutative center, normal subloop, Burnside group

% {\sc UDC:} 512.548.7

{\sc MSC2010:}  20N05   % Loops, quasigroups

\end{abstract}
\maketitle

\section{Introduction}

A Moufang loop is a loop in which the identity
\begin{equation}\label{mi}
(xy)(zx)=(x(yz))x
\end{equation}
holds. Moufang loops are known to be {\em diassociative}, i.\,e. their subloops generated by a pair of
elements are groups. In particular, elements of Moufang loops have unique inverses and
$(xy)^{-1}=y^{-1}x^{-1}$ for all $x,y$.

The {\em commutative center} (also known as the {\em  Moufang center} or
the {\em commutant\/}\footnote{We avoid using the term
`commutant' due to the unfortunate collision with the term `derived subgroup' as both are
translated into Russian as `коммутант'.})
of a Moufang loop $M$ is the set
$$
\operatorname{C}(M)=\{c\in M\mid cx=xc\ \text{for all}\ x\in M\}.
$$
It is known \cite[Theorem IV.3.10]{pf} that $\operatorname{C}(M)$ is a subloop of $M$ which is characteristic.
The following problem had been open for quite a while and officially raised by A.\,Rajah
at {\em Loops '03} conference in Prague.

\begin{problem} Is $\operatorname{C}(M)$ a normal subloop of $M$?
\end{problem}

It was stated in \cite{gag} that the answer to this question is affirmative. Here we show that the answer is in fact
generally negative by constructing two series of examples of Moufang loops whose commutative center is not normal.
The first series is given by an explicit multiplication formula over any field of characteristic $3$. This series contains
a finite loop of order $3^{11}$. The second series is a particular case of the triplication of
Moufang loops of exponent $3$. We show that, for any Moufang loop $M$ of exponent $3$ that does not satisfy
the identity $[[x,y],z]=1$, there is a Moufang loop $\operatorname{M}(M,3)$ whose commutative center is not normal.
In particular, we have the example $\operatorname{M}(\operatorname{B}(3,3),3)$ of order $3^8$,
where $\operatorname{B}(3,3)$ is the free $3$-generator Burnside group of exponent $3$.

Recall that, for a Moufang loop $M$,
$$
\operatorname{Nuc}(M)=\{a\in M\mid (ax)y=a(xy)\ \text{for all}\ x,y\in M\}
$$
is the {\em nucleus} of $M$.
Our results reopen the following conjecture by S.\,Doro \cite{dor}:

\begin{conjecture} If $\operatorname{Nuc}(M)=1$ then $\operatorname{C}(M)$ is normal in $M$.
\end{conjecture}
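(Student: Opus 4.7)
\emph{Strategy.} My plan is to reduce normality of $\operatorname{C}(M)$ in $M$ to the invariance of $\operatorname{C}(M)$ under the inner mappings of $M$, and then to exploit $\operatorname{Nuc}(M)=1$ to force this invariance. A subloop $N$ of a Moufang loop is normal exactly when it is stabilized by all inner mappings of $M$: the middle inner maps $T_x(n)=x\backslash(nx)$, the left inner maps $L_{x,y}(n)=(xy)\backslash(x(yn))$, and the right inner maps $R_{x,y}(n)=((nx)y)/(yx)$, for all $x,y\in M$. When $N=\operatorname{C}(M)$ the maps $T_x$ stabilize $N$ trivially: for $c\in\operatorname{C}(M)$ one has $T_x(c)=x\backslash(cx)=x\backslash(xc)=c$.

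\emph{Key lemma.} The real content therefore lies in $L_{x,y}$ and $R_{x,y}$. The aim is to prove that for every $c\in\operatorname{C}(M)$ and every $x,y\in M$ each associator involving $c$ is nuclear:
\[
(x,y,c),\ (x,c,y),\ (c,x,y)\in\operatorname{Nuc}(M).
\]
Granted these containments, the hypothesis $\operatorname{Nuc}(M)=1$ forces all such associators to vanish, and the inverse property of Moufang loops then gives $L_{x,y}(c)=c$ and $R_{x,y}(c)=c$, so $\operatorname{C}(M)$ is stabilized by all inner mappings and is therefore normal. The plan for proving the associator containments is to expand $(xy)c$, $x(yc)$, $(cx)y$, $c(xy)$, $(xc)y$, $x(cy)$ via the Moufang identity~\eqref{mi}, to move $c$ freely through each bracketing using $xc=cx$, and to verify that each resulting quotient associates with an arbitrary fourth element $z\in M$. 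The diassociativity of $\langle c,x\rangle$ and $\langle c,y\rangle$, which are abelian groups by Moufang's theorem, combined with repeated application of~\eqref{mi}, should supply the identities required.

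\emph{Main obstacle.} The principal difficulty is the Key Lemma itself. The counterexamples constructed in this paper show that the associators $(x,y,c)$ generated by $c\in\operatorname{C}(M)$ are not always in $\operatorname{C}(M)$, let alone in $\operatorname{Nuc}(M)$, so any proof must use the hypothesis $\operatorname{Nuc}(M)=1$ in an essential way rather than as a formal afterthought: the identities of Moufang loops alone cannot suffice. A promising avenue is through Doro's correspondence with groups with triality: translate $\operatorname{C}(M)$ and $\operatorname{Nuc}(M)$ into centralizer and fixed-point data of the triality action on the associated group $G(M)$, and show that vanishing of the nuclear data forces a triality-invariance property on the commutant data that fails in general. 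Pulling this structural statement back to the loop $M$, uniformly over all (not necessarily finite) Moufang loops, appears to be the chief technical challenge.
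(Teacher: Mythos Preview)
The statement under consideration is labelled \emph{Conjecture} in the paper and is not proved there; on the contrary, the paper explicitly says that its counterexamples ``reopen'' Doro's conjecture. There is therefore no proof in the paper against which to compare your attempt, and the problem remains open.

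Your proposal does not close this gap. What you have written is a strategy, not a proof: you reduce the question to a Key Lemma---that $(c,x,y)\in\operatorname{Nuc}(M)$ whenever $c\in\operatorname{C}(M)$---and then, under the heading ``Main obstacle'', acknowledge that you do not know how to establish it. Worse, the two halves of the outline are in tension. In the first half you propose to derive the containment purely from the Moufang identity~(\ref{mi}) and diassociativity, invoking the hypothesis $\operatorname{Nuc}(M)=1$ only at the very end; in the second half you concede that ``the identities of Moufang loops alone cannot suffice'' and that the hypothesis must enter in an essential way. These cannot both be right. If the containment $(c,x,y)\in\operatorname{Nuc}(M)$ were derivable from the Moufang identities alone, it would hold in every Moufang loop and the conjecture would follow at once; no such theorem is known, and your sketch (``expand via~(\ref{mi}), move $c$ freely, verify association with a fourth element $z$'') supplies none of the required identities. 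The triality translation you mention at the end is a reasonable direction to investigate, but as written it is a research programme rather than an argument.
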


\section{Motivation}

For elements $x,y,z$ of a Moufang loop $M$, we denote by $[x,y]$
the unique element $s\in M$ such that $xy=(yx)s$ and
by $(x,y,z)$ the unique element $t\in M$
such that $(xy)z=(x(yz))t$.

The idea behind the construction is the following straightforward observation.

\begin{lemma}\label{mot}
If $\operatorname{C}(M)$ is normal in $M$ then, for every $a\in \operatorname{C}(M)$
and every $b,c,d\in M$, we have $[(a,b,c),d]=1$.
\end{lemma}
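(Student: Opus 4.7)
The plan is to pass to the quotient $M/\operatorname{C}(M)$, which is well defined as a Moufang loop precisely because $\operatorname{C}(M)$ is assumed normal, and to exploit the fact that in this quotient the image of $a$ is the identity. Intuitively, any associator involving the identity element of a loop is trivial, so $(a,b,c)$ must fall into $\operatorname{C}(M)$; then by the very definition of the commutative center it will commute with every element of $M$.

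Carrying this out, I would first record that the canonical projection $\pi\colon M\to M/\operatorname{C}(M)$ is a loop homomorphism (this is exactly what normality of $\operatorname{C}(M)$ buys us), and that any loop homomorphism preserves associators, i.e. $\pi((x,y,z))=(\pi(x),\pi(y),\pi(z))$. This is immediate from applying $\pi$ to the defining identity $(xy)z=(x(yz))(x,y,z)$. Next, for $a\in\operatorname{C}(M)$ we have $\pi(a)=\bar 1$, and $(\bar 1,u,v)=\bar 1$ in every loop, since $(\bar 1\cdot u)v=uv=\bar 1\cdot(uv)$ forces the associator factor to be trivial by uniqueness. Therefore $\pi((a,b,c))=\bar 1$, which is to say $(a,b,c)\in\operatorname{C}(M)$.

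Finally, since $(a,b,c)\in\operatorname{C}(M)$, the defining property of the commutative center yields $(a,b,c)\,d=d\,(a,b,c)$ for every $d\in M$, or equivalently $[(a,b,c),d]=1$. There is essentially no obstacle: the entire content lies in the permissibility of forming the quotient by $\operatorname{C}(M)$ and in the homomorphism-stability of associators. This is what makes the lemma useful as a detection mechanism, since in the subsequent construction the authors will presumably exhibit $a\in\operatorname{C}(M)$ and $b,c,d\in M$ with $[(a,b,c),d]\neq 1$, thereby forcing $\operatorname{C}(M)$ to be non-normal.
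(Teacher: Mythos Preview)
Your argument is correct and essentially identical to the paper's: the paper simply asserts that normality of $\operatorname{C}(M)$ gives $(a,b,c)\in\operatorname{C}(M)$, which is exactly what you justify by passing to the quotient and observing $(\bar 1,\bar b,\bar c)=\bar 1$. The remaining step, that an element of $\operatorname{C}(M)$ commutes with every $d$, is also the same.
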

\begin{proof}
  Since $\operatorname{C}(M)$ is normal and $a\in \operatorname{C}(M)$, we have $(a,b,c)\in \operatorname{C}(M)$.
\end{proof}

Therefore, for any Moufang loop $L$, if there is  $a\in \operatorname{C}(L)$
such that $[(a,b,c),d]\ne 1$ for some $b,c,d\in L$ then $\operatorname{C}(L)$ is not a normal
subloop in view of Lemma~\ref{mot}.
Examples of such loops are constructed in the next sections.

\section{Algebraic loop}\label{expol}

The underlying set of the loop is an $11$-dimensional vector space $V$ over a field $F$ of characteristic $3$.
An element $x\in V$ will be written as a tuple $x=(x_1,x_2,\ldots,x_{11})$, $x_i\in F$.
We introduce a new operation '$\circ$' on $V$ which is given,
for $x,y\in V$, by
\begin{equation}\label{mult}
x\circ y = x + y + f,
\end{equation}
where $f=(f_1,\ldots,f_{11})$ and $f_k$ are polynomials in $x_i$, $y_j$ explicitly given below.
\begin{align*}
f_1   &=  f_2 = f_3  = f_4 =  0, \\
f_5   &=  -x_3y_2,\
f_6    =    -x_4y_2,\
f_7    =    -x_4y_3,\\
f_8   &=\ \ \, x_1x_3y_2-x_1y_2y_3-x_2x_3y_1+x_2y_1y_3,\\
f_9   &=\ \ \, x_1x_4y_2-x_1y_2y_4-x_2x_4y_1+x_2y_1y_4,\\
f_{10}&=\ \ \, x_1x_4y_3-x_1y_3y_4-x_3x_4y_1+x_3y_1y_4,\\
f_{11}&=    -x_1x_2x_4y_3 +x_1x_2y_3y_4 +x_1x_3y_2y_4 +x_1x_4y_2y_3 +x_2x_3y_1y_4 +x_2x_4y_1y_3\\
      &\ \ \ -x_2y_1y_3y_4 +x_3x_4y_1y_2 -x_4y_1y_2y_3 -x_1x_5y_4 +x_1x_6y_3 -x_1x_7y_2 +x_1y_2y_7 \\
      &\ \ \ -x_1y_3y_6 +x_1y_4y_5 +x_2x_7y_1 -x_2y_1y_7 -x_3x_6y_1 +x_3y_1y_6 +x_4x_5y_1 -x_4y_1y_5\\
      &\ \ \  +x_8y_4 -x_9y_3 +x_{10}y_2.
\end{align*}

It can be checked that $L=(V,\circ)$ is a loop in which the Moufang identity (\ref{mi}) holds.
We omit the heavily technical verification. The identity element of $L$ is
the zero vector of $V$ and, for $x\in L$, we have
\begin{equation}\label{inv}
x^{-1} = -x + h,
\end{equation}
where $h=(h_1,\ldots,h_{11})$ and the polynomials $h_k$ are as follows:

\begin{align*}
  h_1    &= h_2 = h_3 =  h_4 = h_8 = h_9 = h_{10} = 0,\\
  h_5    &= -x_2x_3,\
  h_6     = -x_2x_4,\
  h_7     = -x_3x_4,\\
  h_{11} &= \phantom{-}x_2x_{10}-x_3x_9+x_4x_8.
\end{align*}

Let $e_1,\ldots,e_{11}$ be the standard basis of $V$, i.\,e.,
$e_i=(\ldots,0,1,0,\ldots)$ with `$1$' at the $i$th place. Define
$$
a=e_1,\quad b=e_2,\quad c=e_3,\quad d=e_4.
$$
\begin{lemma} $a\in \operatorname{C}(L)$.
\end{lemma}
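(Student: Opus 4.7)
The plan is to reduce the claim to a componentwise check at the level of the polynomial $f$. Since $x\circ y = x + y + f(x,y)$ and vector addition on $V$ is commutative, the equality $a\circ x = x\circ a$ is equivalent to the identity $f_k(a,x) = f_k(x,a)$ for each $k = 1, \ldots, 11$. With $a = e_1$ this amounts to substituting $a_1 = 1$, $a_i = 0$ for $i \geq 2$, into the first argument on one side and into the second on the other, and comparing the results monomial by monomial.

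For $k \leq 4$ there is nothing to check. For $k = 5, 6, 7$, the single monomial in each $f_k$ carries a factor $x_3 y_2$, $x_4 y_2$, or $x_4 y_3$, so after the substitution one subscript from $\{2,3,4\}$ is always forced onto the $a$-argument and both sides vanish (e.g.\ $f_5(a,x) = -a_3 x_2 = 0 = -x_3 a_2 = f_5(x,a)$). For $k = 8, 9, 10$ each $f_k$ is a sum of four monomials with an exchange-like structure: the substitution kills three of the four on each side, leaving the common value $-x_2 x_3$, $-x_2 x_4$, $-x_3 x_4$ respectively.

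The main obstacle, as expected, is the verification of $f_{11}$, which is a sum of $24$ monomials. The strategy is unchanged: under the substitution, every degree-$4$ monomial of $f_{11}$ carries some $a_i$ with $i \geq 2$ and therefore vanishes on both sides, and the same is true of all but three of the degree-$3$ monomials. The three survivors on each side are precisely those whose unique $a$-factor is $a_1 = 1$, and reading them off yields $x_2 x_7 - x_3 x_6 + x_4 x_5$ in both cases. Hence $f_{11}(a,x) = f_{11}(x,a)$, and the lemma follows. The only genuine work is the organized bookkeeping through the $24$ terms of $f_{11}$; no new idea is required, and the check is purely mechanical given the explicit formulas.
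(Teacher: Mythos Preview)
Your argument is correct and follows exactly the same direct verification as the paper, which simply records the common value $x+a+(0,0,0,0,0,0,0,-x_2x_3,-x_2x_4,-x_3x_4,\,x_2x_7-x_3x_6+x_4x_5)$ of $a\circ x$ and $x\circ a$. One small slip in your write-up: $f_{11}$ also contains three degree-$2$ monomials $x_8y_4$, $-x_9y_3$, $x_{10}y_2$ that your degree-$4$/degree-$3$ split does not cover; they too vanish under both substitutions (each carries an $a_i$ with $i\geq 2$), so the conclusion is unaffected.
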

\begin{proof}
This is true because using the multiplication formula (\ref{mult}) one can check that,
for an arbitrary $x\in L$, both $a\circ x$ and $x\circ a$ equal
$$
x+a+( 0, 0, 0, 0, 0, 0, 0, -x_2x_3, -x_2x_4, -x_3x_4,  x_2x_7-x_3x_6+x_4x_5).
$$
\end{proof}

\begin{lemma}\label{abcd} $[(a,b,c),d]\ne 1$ in $L$.
\end{lemma}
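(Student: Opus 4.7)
The plan is to compute the associator $(a,b,c)$ and the commutator $[(a,b,c),d]$ directly via formula~(\ref{mult}), exploiting the fact that $a,b,c,d$ are standard basis vectors. Because each input $x,y$ to any multiplication we encounter has at most a handful of nonzero coordinates, almost every monomial in each $f_k$ contains a zero factor, and the calculation reduces to identifying a very small number of surviving monomials.

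First I would compute $ab$ and $bc$. For $(x,y)=(a,b)$ and $(x,y)=(b,c)$, every monomial in every $f_k$ contains at least two indexed variables that cannot simultaneously be nonzero, so $f=0$ and $ab=e_1+e_2$, $bc=e_2+e_3$. Computing $(ab)\circ c$ gives $(ab)c=e_1+e_2+e_3$ (again $f=0$ by inspection). In $a\circ(bc)$, however, exactly one monomial survives, namely $-x_1y_2y_3$ in $f_8$, which evaluates to $-1$; hence $a(bc)=e_1+e_2+e_3-e_8$. By the definition of the associator, $(a,b,c)$ is the unique $t$ with $(a(bc))\circ t=(ab)c$. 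The choice $t=e_8$ works, because in $(a(bc))\circ e_8$ the only nonzero $y$-coordinate is $y_8$, and no $f_k$ involves any $y_i$ with $i\ge 8$; hence $f=0$ and the product is the vector sum $e_1+e_2+e_3$. Therefore $(a,b,c)=e_8$.

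Next I would compute $[e_8,e_4]$ from $e_8\circ e_4=(e_4\circ e_8)\circ[e_8,e_4]$. In $e_8\circ e_4$ the only nonvanishing monomial in any $f_k$ is $x_8y_4=1$ inside $f_{11}$, so $e_8\circ e_4=e_4+e_8+e_{11}$. In $e_4\circ e_8$ no $f_k$ involves $y_8$, so $e_4\circ e_8=e_4+e_8$. Trying $[e_8,e_4]=e_{11}$ and computing $(e_4+e_8)\circ e_{11}$, one sees $f=0$ (no monomial involves $y_{11}$), so the product is $e_4+e_8+e_{11}=e_8\circ e_4$. Hence $[e_8,e_4]=e_{11}\ne 0$, which is the identity of $L$. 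This gives $[(a,b,c),d]=e_{11}\ne 1$.

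The main obstacle is nothing conceptual, only careful bookkeeping of which monomials in the $f_k$ polynomials have every factor nonzero. Since all our inputs are basis vectors (or very short sums of basis vectors), the surviving monomials are rare, easy to enumerate, and the verification at each step reduces to a single scan through the list of terms defining $f_5,\dots,f_{11}$.
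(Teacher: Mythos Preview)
Your proof is correct and follows essentially the same route as the paper: both argue by direct computation with formula~(\ref{mult}) that $(a,b,c)=e_8$ and $[e_8,d]=e_{11}\ne 0$. The paper merely asserts these identities (along with a few companion ones such as $e_5=[b,c]$), while you carry out the bookkeeping explicitly; one minor wording issue is the clause ``which is the identity of $L$''---you presumably mean that $0$ (not $e_{11}$) is the identity, so it would read more cleanly as ``where $0$ is the identity of $L$''.
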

\begin{proof}
Using (\ref{mult}) it can be shown that the following equalities hold in $L$:
\begin{align*}
& e_5=[b,c],\ e_6=[b,d],\ e_7=[c,d], \\
& e_8=(a,b,c),\ e_9=(a,b,d),\ e_{10}=(a,c,d),\\
& e_{11}=[(a,b,c),d].\\
\end{align*}
Since $e_{11}\ne 0$ in $V$, it follows that $[(a,b,c),d]$ is not the identity element of $L$.
\end{proof}

Therefore, Lemma \ref{mot} implies
\begin{corollary} $\operatorname{C}(L)$ is not a normal subloop of $L$.
\end{corollary}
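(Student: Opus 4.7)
The plan is simply to invoke the contrapositive of Lemma~\ref{mot} together with the two preceding lemmas. Lemma~\ref{mot} asserts that if $\operatorname{C}(L)$ were normal, then $[(a,b,c),d]=1$ for every $a\in\operatorname{C}(L)$ and every $b,c,d\in L$. Since the penultimate lemma exhibits a specific $a=e_1$ lying in $\operatorname{C}(L)$ and Lemma~\ref{abcd} shows that with $b=e_2$, $c=e_3$, $d=e_4$ the commutator $[(a,b,c),d]$ equals $e_{11}\ne 0$ in $V$, this produces a concrete violation of the conclusion of Lemma~\ref{mot}. Hence the hypothesis of that lemma cannot hold for $L$, and $\operatorname{C}(L)$ fails to be a normal subloop.

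All the substance has been packaged into the two preceding lemmas and into the (omitted) verification that $(V,\circ)$ is in fact a Moufang loop, so there is no further obstacle for the corollary itself. In other words, my proof would be a one-line deduction: apply Lemma~\ref{mot} in the contrapositive direction to the element $a=e_1\in\operatorname{C}(L)$ and to $b=e_2,\ c=e_3,\ d=e_4$, and use $e_{11}\ne 0$ to conclude non-normality. The genuinely hard work — which I would not redo here — is the polynomial bookkeeping needed to check the Moufang identity for~\eqref{mult} and to confirm that the chain $e_5=[b,c],\ldots,e_{11}=[(a,b,c),d]$ of basis vectors really arises from the stated commutators and associators; that part is precisely what the authors declare routine but technically heavy.
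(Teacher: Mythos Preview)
Your proposal is correct and matches the paper's own argument: the corollary is stated immediately after Lemma~\ref{abcd} with the single remark that Lemma~\ref{mot} implies it, using exactly the elements $a=e_1\in\operatorname{C}(L)$ and $b=e_2$, $c=e_3$, $d=e_4$. There is no additional content in the paper's proof beyond this one-line deduction.
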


The following properties of $L$ are worth mentioning.
$L$ has exponent $3$ and satisfies the identity $[[x,y],z]=1$.
$\operatorname{C}(L)=\langle e_1,e_5,e_6,e_7,e_{11}\rangle_F$
is associative of order $3^5$, where $\langle \ \rangle_F$ denotes the $F$-linear span in $V$.
 $\operatorname{C}(L)$ is close to being normal --- it has a subloop $N$ of index $3$ which is normal in $L$.
We have $N=\langle e_5,e_6,e_7,e_{11}\rangle_F$ and $L/N$ is commutative.
Also, $\operatorname{Nuc}(L)=\langle e_8,e_9,e_{10},e_{11}\rangle_F$ and
$\operatorname{Z}(L)=\operatorname{Nuc}(L)\cap \operatorname{C}(L) = \langle e_{11}\rangle_F$.

In particular, if $F=\mathbb{F}_3$, we obtain an example of order $3^{11}$.

\section{Triplication of Moufang loops of exponent $3$}

A group $G$ possessing automorphisms $\rho$ and $\sigma$ that satisfy $\rho^3=\sigma^2=(\rho\sigma)^2=1$ is
called a {\it group with triality $S=\langle\rho,\sigma\rangle$} if
$$
(x^{-1}x^{\sigma})(x^{-1}x^{\sigma})^\rho(x^{-1}x^{\sigma})^{\rho^2}=1
$$
for every $x$ in $G$. In such a group,
the set
$$\M(G)=\{ x^{-1}x^{\sigma}\ |\ x\in G\}$$
is a Moufang loop with respect to the multiplication
\begin{equation} \label{loop_mult}
m.n=m^{-\rho} n  m^{-\rho^2}
\end{equation}
for all $n,m\in \M(G)$. Conversely, every Moufang loops arises so from a suitable group with triality.
Observe that taking powers or inverses of elements of $\M(G)$ is the same, whether considered
under the loop or group operation. We will also require the following properties.

\begin{lemma}[\mbox{\cite[Lemma 2.1]{mlt}}]\label{gzt}  Let $G$ be a group with triality
$S=\langle\rho,\sigma\rangle$.
Then, for all $m,n\in \M(G)$, we have
\begin{enumerate}
\item[$(i)$] $m^\sigma = m^{-1}$;
\item[$(ii)$] $m,m^\r,m^{\r^2}$ pairwise commute;
\item[$(iii)$] $m^{-\r} nm^{-\r^2}=n^{-\r^2} m n^{-\r}$;
\item[$(iv)$] $m.n.m = mnm$.
\end{enumerate}
\end{lemma}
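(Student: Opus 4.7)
The plan is to handle the four items in order of depth, establishing (i) first, then the central commutativity assertion (ii), and using them to derive (iii) and (iv).

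Item (i) is a one-line consequence of the definition of $\M(G)$. Writing $m=x^{-1}x^\sigma$ for some $x\in G$, I would apply $\sigma$ and use $\sigma^2=1$ to get $m^\sigma=(x^\sigma)^{-1}x=m^{-1}$.

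Item (ii) is the principal technical step and the main obstacle. The triality identity applied to $x$ yields the single relation $m\cdot m^\rho\cdot m^{\rho^2}=1$, which by itself does \emph{not} force pairwise commutativity. The plan is to exploit the braid relation $\sigma\rho\sigma=\rho^{-1}$ (which follows from $(\rho\sigma)^2=1$) together with (i): a direct computation then gives $(m^\rho)^\sigma=m^{-\rho^2}$ and $(m^{\rho^2})^\sigma=m^{-\rho}$, so that $\sigma$ interchanges $m^\rho$ and $m^{\rho^2}$ up to inversion. To upgrade this to pairwise commutativity, I would apply the triality identity to auxiliary elements (products $xg$ for suitable $g\in G$, e.g.\ elements such as $m^\rho$ itself), producing extra relations in the subgroup $\langle m,m^\rho,m^{\rho^2}\rangle$; combining them with the $\rho$- and $\sigma$-actions above should force all three commutators to vanish. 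I expect this is the step that carries the real content.

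For item (iii), the plan is to use (ii) to rewrite $m^{-\rho^2}=mm^\rho$ and $m^{-\rho}=mm^{\rho^2}$ (and similarly for $n$), after which both sides expand into products in $G$ that can be matched using the pairwise commutativity inside $\{m,m^\rho,m^{\rho^2}\}$ and inside $\{n,n^\rho,n^{\rho^2}\}$. A built-in sanity check is that applying $\sigma$ turns each side into its own group-theoretic inverse, which is consistent with both sides lying in $\M(G)$.

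Item (iv) then follows cleanly from (ii). Setting $p=m.n=m^{-\rho}n\,m^{-\rho^2}$, direct computation gives $p^{-\rho}=m\,n^{-\rho}\,m^{\rho^2}$ and $p^{-\rho^2}=m^\rho n^{-\rho^2}m$, so that $(m.n).m=p^{-\rho}mp^{-\rho^2}$ expands to a product in which the central block $m\cdot m^\rho\cdot m^{\rho^2}$ collapses to $1$ by (ii) and triality, and the surviving factor $n^{-\rho}\cdot n^{-\rho^2}$ collapses to $n$ via $n\cdot n^\rho\cdot n^{\rho^2}=1$, leaving $mnm$.
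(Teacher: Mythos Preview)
The paper does not actually prove this lemma; it is quoted from \cite[Lemma~2.1]{mlt} and used as a black box, so there is no ``paper's own proof'' to compare against. That said, let me assess your outline on its merits.

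Your arguments for $(i)$ and $(iv)$ are correct. In $(iv)$, with $p=m.n=m^{-\rho}nm^{-\rho^2}$ one indeed gets $p^{-\rho}mp^{-\rho^2}=mn^{-\rho}(m^{\rho^2}mm^{\rho})n^{-\rho^2}m=mnm$ using only $(ii)$; this gives $(m.n).m=mnm$, and the other bracketing $m.(n.m)=mnm$ follows once $(iii)$ is available, or from flexibility of the Moufang loop $\M(G)$. Your plan for $(ii)$ is vague, but you correctly flag it as the substantive step.

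The genuine gap is in $(iii)$. Your plan is to substitute $m^{-\rho}=mm^{\rho^2}$, $m^{-\rho^2}=m^{\rho}m$ (and similarly for $n$) and then match words using only the pairwise commutations inside $\{m,m^{\rho},m^{\rho^2}\}$ and inside $\{n,n^{\rho},n^{\rho^2}\}$. That cannot work: with those substitutions the two sides become
\[
m\,m^{\rho^2}\,n\,m^{\rho}\,m \qquad\text{and}\qquad n^{\rho}\,n\,m\,n\,n^{\rho^2},
\]
which are distinct in the free product of the two abelian groups $\langle m,m^{\rho},m^{\rho^2}\rangle$ and $\langle n,n^{\rho},n^{\rho^2}\rangle$. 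No amount of commuting within each triple separately will equate them. The identity $(iii)$ genuinely requires a relation that mixes $m$- and $n$-conjugates; one obtains such a relation by applying the triality identity (or, equivalently, $(ii)$) to a \emph{compound} element such as $p=m.n\in\M(G)$, or by computing $g^{-1}g^{\sigma}$ for a suitable product $g$ involving both $m$ and $n$. Without that extra input, $(iii)$ does not follow.
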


For more details on groups with triality, see \cite{dor,gz}.

In the loop $(\M(G),.)$, we write $\llb x,y\rrb=x^{-1}.y^{-1}.x.y$ instead of $[x,y]=x^{-1}y^{-1}xy$ to make
a distinction between the loop and group commutator.

Let $G$ be a group with triality $S=\langle\rho,\sigma\rangle$ and let $M=\M(G)$. It was observed in \cite[Lemma 3]{syl}
that the natural semidirect product $\widetilde{G}=G\ld \langle\rho \rangle$ is also a group
with triality $S$ if and only if $M$ has exponent $3$. In this case, we will denote the
corresponding Moufang loop $\M(\widetilde{G})$ by $\operatorname{M}(M,3)$ and
call it the {\em triplication} of $M$. It contains $M$ as a normal subloop of index $3$
and coincides as a subset of $\widetilde{G}$ with
$$
M\ \cup\  \rho M\rho\ \cup\ \rho^2 M\rho^2.
$$

\begin{lemma}\label{trpr}
Let $M$ be a Moufang loop of exponent $3$ and let $L=\operatorname{M}(M,3)$. Then
\begin{enumerate}
\item[$(i)$] $L$ has exponent $3$;
\item[$(ii)$] The multiplication rule of $L$ is as given in Table \ref{mmm3};
\item[$(iii)$] $\r \in \operatorname{C}(L)$;
\item[$(iv)$] $(\r,m,n)=\llb n^{-1},m^{-1}\rrb$ for all $m,n\in M$.
\item[$(v)$] $L$ is associative iff $M$ is an abelian group.

\end{enumerate}
\end{lemma}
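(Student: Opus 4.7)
The plan is to work throughout inside $\widetilde G = G \ld \langle \rho \rangle$, using the description $L = M \cup \rho M\rho \cup \rho^2 M\rho^2$, formula (\ref{loop_mult}), and Lemma \ref{gzt}. Two observations keep the bookkeeping clean: the element $\rho\in\widetilde G$ is fixed by the automorphism $\rho$ (so $\rho^\rho=\rho$), and loop powers coincide with group powers in $\widetilde G$.

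Part (i) follows by expanding $(\rho m\rho)^3$ directly in $\widetilde G$, pushing all factors of $\rho$ to the right via the $\rho$-action, and obtaining $m\cdot m^\rho\cdot m^{\rho^2}$, which equals $1$ by the defining triality identity applied to any preimage of $m$ in $G$; elements of $M$ cube to $1$ by assumption, and the case $\rho^2 m\rho^2$ is symmetric. Part (ii) is a routine tabulation: substituting each of the three shapes $m$, $\rho m\rho$, $\rho^2 m\rho^2$ into (\ref{loop_mult}) as first and second factor produces nine cases that simplify via $\rho^3=1$ to elements of the same three shapes, yielding Table \ref{mmm3}. For (iii), since $\rho^{-\rho}=\rho^{-\rho^2}=\rho^{-1}$, formula (\ref{loop_mult}) gives $\rho.x=\rho^{-1}x\rho^{-1}$ for every $x\in L$; dually, Lemma \ref{gzt}(iii) applied with $m:=x$ and $n:=\rho$ yields $x.\rho=\rho^{-\rho^2}x\rho^{-\rho}=\rho^{-1}x\rho^{-1}$, so the two products agree. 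For (iv), I would compute the associator from its defining relation $(\rho.m).n = \bigl(\rho.(m.n)\bigr).(\rho,m,n)$; using (iii) the two sides unfold into short products in $\widetilde G$ whose ratio, by another application of (\ref{loop_mult}) inside $M$, is exactly $\llb n^{-1},m^{-1}\rrb$.

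Part (v) is where I expect the main obstacle. The $(\Rightarrow)$ direction is immediate from (iv): associativity of $L$ kills $(\rho,m,n)$, hence $\llb n^{-1},m^{-1}\rrb=1$ in $M$; since $M\le L$ is also associative, $M$ is an abelian group. For the converse, assume $M$ abelian. Then (iv) gives $(\rho,m,n)=1$ for $m,n\in M$, and combined with (iii) this places $\rho$ in $\operatorname{Z}(L)$. I would conclude associativity by verifying $\rho\in\operatorname{Nuc}(L)$, i.e.\ that $(\rho,x,y)=1$ for all $x,y\in L$ and not merely for $x,y\in M$ — extending the identity across the $\rho$-cosets via the Moufang symmetries of the associator — and by observing that every triple drawn from $M$ itself associates, which is automatic once $M$ is an abelian group. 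Since the nucleus is a subloop and every element of $L$ lies in the subloop generated by $M$ and $\rho$ (using Lemma \ref{gzt}(iv) to identify $\rho m\rho$ with $\rho.m.\rho$), this forces $\operatorname{Nuc}(L)=L$. The delicate point, and where the real work lies, is extending $(\rho,m,n)=1$ past $M$ and collapsing the multiplication formulas from (ii) once $M$ is commutative.
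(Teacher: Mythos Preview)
Your treatment of (i)--(iv) matches the paper's, with one pleasant shortcut in (iii): you obtain $\rho.x=\rho^{-1}x\rho^{-1}=x.\rho$ in a single stroke from Lemma~\ref{gzt}(iii), whereas the paper reads the same conclusion off Table~\ref{mmm3} in three separate cases.

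For (v)($\Leftarrow$), however, you are making the argument harder than it is, and your nucleus route has a genuine gap. From $\rho\in\operatorname{Nuc}(L)$ together with ``every triple drawn from $M$ associates'' you cannot conclude $\operatorname{Nuc}(L)=L$: the latter would require $M\subseteq\operatorname{Nuc}(L)$, i.e.\ $(m,x,y)=1$ for $m\in M$ and \emph{all} $x,y\in L$, which is strictly stronger than internal associativity of $M$ and is not supplied by your argument. Even establishing $\rho\in\operatorname{Nuc}(L)$ by ``extending across the $\rho$-cosets via Moufang symmetries'' is not routine, since associators in Moufang loops are not multiplicative in their arguments.

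The paper sidesteps all of this. When $M$ is commutative of exponent $3$ one has $n.m=m.n$ and $m^{-1}.n.m^{-1}=m^{-2}.n=m.n$, so every entry of Table~\ref{mmm3} becomes $\rho^{i+j}(m.n)\rho^{i+j}$; the table is literally the multiplication of the direct product $M\times\langle\rho\rangle$, hence $L$ is associative (indeed an elementary abelian $3$-group). That is precisely the ``collapsing'' you allude to at the end --- but it is the entire converse argument, not a residual difficulty layered on top of the nucleus computation.
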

\begin{proof} We will use Lemma \ref{gzt} and the fact that $M$ has exponent $3$ implicitly throughout the proof.

$(i)$
Let $m\in L$. If $m\in M$ then $m^3=1$. If $m=\r n\r$ then
$m^3=\r n \r^2 n \r^2 n \r = n^{\r^2}n n^{\r}=1$. If $n=\r^2 n\r^2$ then $m^3=\r^2 n\r n\r n\r^2=n^\r nn^{\r^2}=1$. Hence, $L$ has
exponent $3$.

$(ii)$ Let $m,n\in M$. Then the product rules are
\begin{multline*}
    m.\r n\r = m^{-\r}\r n\r m^{-\r^2} = \r m^{-\r^2}nm^{-\r}\r=\r (n.m)\r, \\
\shoveleft{\r m\r.n = n^{-\r^2}\r m\r n^{-\r} = \r n^{-1}mn^{-1} \r = \r (n^{-1}.m.n^{-1}) \r = \r (m^{-1}.n.m^{-1}) \r,}\\
\shoveleft{\r m\r . \r n\r = (\r m\r)^{-\r}\r n\r (\r m\r)^{-\r^2}\!=
\r m^{-1}\r n\r m^{-1}\r = \r^2 m^{-\r}nm^{-\r^2}\r^2 }\\
\shoveright{= \r^2(m.n)\r^2,}\\
\shoveleft{\r^2 m\r^2.\r n \r = (\r^2 m\r^2)^{-\r}\r n\r (\r^2 m\r^2)^{-\r^2}
=\r m^{-\r}\r^2n\r^2 m^{-\r^2}\r = m^{-1}nm^{-1}}\\
    =m^{-1}.n.m^{-1},
\end{multline*}
and similarly for other choices of the factors.

$(iii)$ Let $m\in M$. Using $(ii)$, we have
\begin{align*}
       \r.n & =\r^2\r^2.n=\r^2n\r^2, &       n.\r & =n.\r^2\r^2= \r^2 n^{-2}\r^2 = \r^2 n\r^2, \\
  \r.\r n\r & =\r^2\r^2.\r n\r=n,     &  \r n\r.\r & =\r n\r.\r^2\r^2 = n,  \\
  \r. \r^2 n \r^2& =\r^2\r^2. \r^2 n \r^2 =\r n\r, & \r^2 n \r^2.\r & = \r^2 n \r^2.\r^2\r^2 = \r n\r.
\end{align*}
Therefore, $\r \in \operatorname{C}(L)$.

$(iv)$ By $(ii)$, we have
\begin{multline*}
(\r,m,n)=(\r.(m.n))^{-1}.((\r.m).n)=(\r^2(m.n)\r^2)^{-1}.(\r^2m\r^2.n)\\
=(\r(m.n)^{-1}\r).(\r^2(n.m)\r^2)=n.m.(m.n)^{-1}= \llb n^{-1},m^{-1}\rrb.
\end{multline*}

$(v)$  If $M$ is not associative then neither is $L$, since $M$ is a subloop of $L$. By $(iv)$, $L$ is not
associative if $M$ is not commutative.
Conversely, if $M$ is a commutative group then Table 1 turns into the multiplication rule of
the direct product $M\times \langle\r \rangle$, hence $L$ is associative in this case (an elementary abelian $3$-group, in fact).
\end{proof}

\begin{table}
  \caption{The product $x.y$ in $\operatorname{M}(M,3)$}\label{mmm3}
\begin{center}
\begin{tabular}{c|ccc}
   $x\ \ \backslash \ \ y$ & $n$ & $\r n\r$ & $\r^2 n \r^2$ \\
\hline
$m$ & $m.n$ & $\r (n.m) \r$ & $\r^2 (m^{-1}.n.m^{-1}) \r^2\vphantom{A^{A^A}}$ \\
$\r m \r$ & $\r (m^{-1}.n.m^{-1}) \r$ & $\r^2 (m.n) \r^2$ & $n.m$  \\
$\r^2 m \r^2$ & $\r^2(n.m) \r^2$ & $m^{-1}.n.m^{-1}$ & $\r (m.n)\r$
\end{tabular}
\end{center}
\end{table}

Lemma \ref{trpr}$(ii)$ shows that up to isomorphism $\operatorname{M}(M,3)$
does not depend on the group $G$ as long as $\M(G)\cong M$ (such a group is not unique). In other words, we can
construct $\operatorname{M}(M,3)$ by adjoining to $M$ an outer element $\r=\r^2\r^2$ and use Table \ref{mmm3} to define
multiplication on formal elements of the shape $m,\r m\r, \r^2 m\r^2$ with $m\in M$.

\begin{proposition}\label{main}
  Let $M$ be a Moufang loop of exponent $3$ that does not satisfy the identity $[[x,y],z]=1$. Then the
  commutative center of $\operatorname{M}(M,3)$ is not a normal subloop.
\end{proposition}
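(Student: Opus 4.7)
The strategy is to invoke Lemma \ref{mot} in its contrapositive form: I will exhibit an element $a\in\operatorname{C}(L)$, where $L=\operatorname{M}(M,3)$, and elements $b,c,d\in L$ such that $\llb (a,b,c),d\rrb\ne 1$ in $L$.

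The natural candidate for $a$ is provided by Lemma \ref{trpr}$(iii)$: take $a=\r$. Lemma \ref{trpr}$(iv)$ then gives an explicit formula for the associators $(\r,m,n)$ with $m,n\in M$, namely $(\r,m,n)=\llb n^{-1},m^{-1}\rrb$. So if I set $b=y^{-1}$ and $c=x^{-1}$ for elements $x,y\in M$, I obtain
$$
(\r,b,c)=\llb x,y\rrb.
$$
Choosing $d=z\in M$, the condition I must verify becomes
$$
\llb\llb x,y\rrb,z\rrb\ne 1 \quad\text{in }L.
$$

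Now the key observation is that $M$ embeds as a subloop of $L$ (in fact as a normal subloop of index $3$, as stated after the definition of the triplication), so the loop commutator $\llb\cdot,\cdot\rrb$ of $L$, when restricted to pairs of elements of $M$, coincides with the loop commutator $[\cdot,\cdot]$ of $M$ as defined in Section 2. By the hypothesis of the proposition, there exist $x,y,z\in M$ with $[[x,y],z]\ne 1$ in $M$, and this same inequality therefore holds in $L$ with the $\llb\cdot,\cdot\rrb$ notation.

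Combining these steps, $a=\r\in\operatorname{C}(L)$ together with $b=y^{-1}$, $c=x^{-1}$, $d=z$ yields a violation of the conclusion of Lemma \ref{mot}, so $\operatorname{C}(L)$ cannot be normal in $L$. There is essentially no calculational obstacle here: everything reduces to the two identities in Lemma \ref{trpr}$(iii)$--$(iv)$. The only mild subtlety is notational, namely keeping track of which commutator lives in which loop and recognizing that the subloop inclusion $M\hookrightarrow L$ preserves inverses and commutators, so that the hypothesis on $M$ transfers directly to $L$.
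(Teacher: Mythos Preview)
Your proof is correct and follows essentially the same route as the paper: both take $a=\rho\in\operatorname{C}(L)$ via Lemma~\ref{trpr}$(iii)$, compute the associator $(\rho,\cdot,\cdot)$ as a commutator inside $M$ via Lemma~\ref{trpr}$(iv)$, and then invoke Lemma~\ref{mot}. The only difference is cosmetic---you substitute $b=y^{-1}$, $c=x^{-1}$ up front to obtain $\llb x,y\rrb$ directly, whereas the paper leaves the associator as $\llb y^{-1},x^{-1}\rrb$ and simply notes it lies outside $\operatorname{C}(L)$.
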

\begin{proof}
Denote $L=\operatorname{M}(M,3)$. By Lemma \ref{trpr}$(iii)$, $\r\in \operatorname{C}(L)$. By assumption,
there are $x,y\in M$ such that $[y^{-1},x^{-1}]\not\in \operatorname{C}(L)$.
We have $(\r,x,y)=[y^{-1},x^{-1}]$ by Lemma \ref{trpr}$(iv)$. Therefore, $\operatorname{C}(L)$ is not normal
by Lemma \ref{mot}.
\end{proof}

It is known \cite[\S 18.2]{hall} that the free $r$-generator Burnside group $\operatorname{B}(3,r)$ of
exponent $3$ has nilpotency class $3$ for $r\geqslant 3$.
In particular, it does not satisfy the identity $[[x,y],z]=1$. Hence, we obtain

\begin{corollary} The commutative center of the Moufang triplication $\operatorname{M}(\operatorname{B}(3,r),3)$
is not a normal subloop if $r\geqslant 3$.
\end{corollary}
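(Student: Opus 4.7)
The plan is to apply Proposition \ref{main} directly to $M=\operatorname{B}(3,r)$, so the entire task reduces to verifying the two hypotheses of that proposition: that $\operatorname{B}(3,r)$ is a Moufang loop of exponent $3$, and that it fails to satisfy the identity $[[x,y],z]=1$.

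First, since $\operatorname{B}(3,r)$ is a group, it is in particular a (diassociative) Moufang loop, and by definition every element has order dividing $3$. So the only non-formal ingredient is the commutator condition. For this I invoke the cited result of Hall \cite[\S 18.2]{hall}, which guarantees that $\operatorname{B}(3,r)$ has nilpotency class exactly $3$ when $r\geqslant 3$. This means the third term of the lower central series, $\gamma_3(\operatorname{B}(3,r))=[[\operatorname{B}(3,r),\operatorname{B}(3,r)],\operatorname{B}(3,r)]$, is nontrivial, and since this subgroup is generated (in the group-theoretic sense) by triple commutators $[[x,y],z]$, at least one such triple commutator must be different from $1$. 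Equivalently, the identity $[[x,y],z]=1$ fails in $\operatorname{B}(3,r)$.

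With both hypotheses in place, Proposition \ref{main} immediately yields that $\operatorname{C}(\operatorname{M}(\operatorname{B}(3,r),3))$ is not a normal subloop, completing the proof. There is no real obstacle here; the only substantive content beyond bookkeeping is the appeal to Hall's classical nilpotency-class result, which is quoted from the literature.
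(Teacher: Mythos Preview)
Your proposal is correct and follows exactly the paper's approach: invoke Hall's result that $\operatorname{B}(3,r)$ has nilpotency class $3$ for $r\geqslant 3$, deduce that the identity $[[x,y],z]=1$ fails, and apply Proposition~\ref{main}. The only difference is that you spell out the easy verifications (groups are Moufang loops; nilpotency class $3$ means some triple commutator is nontrivial) that the paper leaves implicit.
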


The minimal example in this series is the loop $\operatorname{M}(\operatorname{B}(3,3),3)$ of order $3^8$.

Finally, observe that the loops from Section \ref{expol} cannot be obtained by an application of Proposition \ref{main}, because
they satisfy the identity $[[x,y],z]=1$.

\end{document}